\newtheorem{thm}{Theorem}[section]
\newtheorem{lemma}[thm]{Lemma}
\newtheorem{cor}[thm]{Corollary}
\newtheorem{conj}[thm]{Conjecture}
\theoremstyle{definition}
\newtheorem{defn}[thm]{Definition}
\newtheorem{example}[thm]{Example}
\newtheorem{remark}[thm]{Remark}
\newcommand\<{\langle}
\newcommand\0{\mathbf{0}}
\newcommand\CC{\mathbb{C}}
\newcommand\NN{\mathbb{N}}
\newcommand\RR{\mathbb{R}}
\newcommand\ZZ{\mathbb{Z}}
\newcommand\bg{\bm{\gamma}}
\newcommand\bb{\mathbf{b}}
\newcommand\pp{\mathbf{p}}
\newcommand\qq{\mathbf{q}}
\newcommand\cB{\mathcal{B}}
\newcommand\cN{\mathcal{N}}
\newcommand\cP{\mathcal{P}}
\newcommand\cW{\mathcal{W}}
\newcommand\ttt{\mathbf{t}}
\newcommand\minus{\smallsetminus}
\renewcommand\>{\rangle}
\renewcommand\aa{\mathbf{a}}
\newcommand\ceil[1]{\lceil{#1}\rceil}
\newcommand\nim{\textsc{Nim}}
\newcommand\dawson{\textsc{Dawson's Chess}}
\begin{document}

\mbox{}\vspace{-1ex}
\title{Algorithms for lattice games}
\author{Alan Guo and Ezra Miller}
\address{Department of Mathematics\\Duke University\\Durham, NC 27708}
\curraddr[Guo]{Electrical Engineering \& Computer Science\\MIT\\Cambridge, MA 02139}
\email{aguo@mit.edu, ezra@math.duke.edu}

\begin{abstract}
This paper provides effective methods for the polyhedral formulation
of impartial finite combinatorial games as lattice games
\cite{gmw,latticeGames}.  Given a rational strategy for a lattice
game, a polynomial time algorithm is presented to decide (i)~whether a
given position is a winning position, and to find a move to a winning
position, if not; and (ii)~to decide whether two given positions are
congruent, in the sense of mis\`ere quotient theory
\cite{Pla05,misereQuots}.  The methods are based on the theory of
short rational generating functions \cite{BaWo03}.
\end{abstract}

\keywords{combinatorial game, lattice game, convex polyhedron,
generating function, affine semigroup, mis\`ere quotient}

\date{25 May 2011}

\maketitle

\section{Introduction}

In \cite{latticeGames}, we reformulated the theory of finite impartial
combinatorial games in the language of combinatorial commutative
algebra and convex rational polyhedral geometry.  In general, the data
provided by a lattice game---a rule set and a game board---do not
allow for easy computation.  Our purpose in this note is to provide
details supporting the claim that \emph{rational strategies}
(Definition~\ref{d:ratstrat}) and \emph{affine stratifications}
(Definition~\ref{d:strat}), two data structures introduced to encode
the set of winning positions of a lattice game \cite{latticeGames},
allow for efficient computation of winning strategies using the theory
of short rational generating functions \cite{BaWo03}.  For example,
given a rational strategy, which is a short rational generating
function for the set of winning positions, computations of simple
Hadamard products decide, in polynomial time, whether any particular
position in a lattice game is a winning or losing position, and which
moves lead to winning positions if the starting position is a losing
position (Theorem~\ref{t:rational}).  Thus the algorithms produce a
winning strategy whose time is polynomial in the input complexity
(Definition~\ref{d:complexity}) once certain parameters, such as
dimension of the lattice game, are held constant.  Other algorithms
extract rational strategies from affine stratifications
(Theorem~\ref{t:strat}) in polynomial time.  None of these results are
hard, given the theory developed by Barvinok and Woods \cite{BaWo03},
but it is worth bringing these methods to the attention of the
combinatorial game theory community.  In addition, the details require
care, and a few results of independent interest arise along the way,
such as Theorem~\ref{t:n-affinestrat}, which says that the complement
of a set with an affine stratification possesses an affine
stratification.

Lattice games that are \emph{squarefree},
\cite[Erratum]{latticeGames}, generalizing the well-known heap-based
\emph{octal games} \cite{GuySmith56} (or see
\cite[Example~6.4]{latticeGames}) such as \nim\ \cite{nim} and
\dawson\ \cite{Dawson34} (with bounded heap size), have tightly
controlled structure in their sets of winning positions under normal
play \cite[Theorem~6.11]{latticeGames}.  However, our efficient
algorithm for computing the set of winning positions in normal-play
squarefree games \cite[Theorem~7.4]{latticeGames} fails to extend to
mis\`ere play, where the final player to move loses.  A~key long-term
goal of this project is to find efficient algorithms for computing
winning strategies in mis\`ere squarefree games, particularly \dawson\
(Remark~\ref{r:dawson}).  As a first step, we conjecture that every
squarefree lattice game---under normal play, ordinary mis\`ere play,
or the more general mis\`ere play allowed by the axioms of lattice
games---possesses an affine stratification
(Conjecture~\ref{c:latticeAffStrat}).  We had earlier conjectured that
every lattice game, squarefree or not, possesses an affine
stratification \cite[Conjecture~8.9]{latticeGames}, but Alex Fink has
disproved that by showing general lattice games to be far from
behaving so calmly~\cite{fink11}.

Other data structures, notably mis\`ere quotients
\cite{Pla05,misereQuots}, that encode winning strategies in families
of games suffer as much as rational strategies do in the face of
Fink's aperiodicity: sufficiently aperiodic sets of P-positions induce
trivial mis\`ere equivalence relations.  Consequently, the bipartite
monoid structure from mis\`ere quotient theory
\cite{Pla05,misereQuots} results in an obvious monoid (a free finitely
generated commutative monoid) with an inscrutable bipartition (the
aperiodic set of P-positions).  Nonetheless, it remains plausible that
mis\`ere quotients extract valuable information about general
squarefree games.  As such, it is useful to continue the quest for
algorithms to compute mis\`ere quotients,
which have already led to advances in our understanding of octal games
with finite quotients \cite{Pla05,misereQuots}; see \cite{Wei09} for
recent progress in some cases.  Theorem~\ref{t:disting} is a step
toward computing infinite mis\`ere quotients from rational strategies:
it gives an efficient test for mis\`ere equivalence.

The results in this note reduce the problem of efficiently finding a
winning strategy for a finite impartial combinatorial game to the
problem of efficiently computing an affine stratification, or even
merely a rational strategy.  Neither of these would complete the
solution of, say, \dawson\ in polynomial time, because the
polynomiality here assumes bounded heap size, but they would be
insightful first~steps.

\subsection*{Acknowledgements}

The seminal idea for this work emerged from extended discussions at
Dalhousie University with the participants of Games-At-Dal-6 (July,
2009).  Thanks also go to Sasha Barvinok for a discussion of
computations with generating functions, and for pointing out his
beautiful survey article \cite{Bar06}.  Partial funding was provided
by NSF Career DMS-0449102 = DMS-1014112 and NSF DMS-1001437.

\section{Lattice games}\label{s:boards}

Precise notions of complexity require a review of the axioms for
lattice games from \cite{latticeGames}.  To that end, fix a pointed
rational convex polyhedron $\Pi \subset \RR^d$ with recession cone $C$
of dimension~$d$.  Write $\Lambda = \Pi \cap \ZZ^d$ for the set of
integer points in~$\Pi$.

\begin{defn}\label{d:tangentcone}
Given an extremal ray $\rho$ of a cone $C$, the \emph{negative tangent
cone} of $C$ along $\rho$ is $-T_\rho C = -\bigcap_{H \supset \rho}
H_+$ where $H_+ \supseteq C$ is the positive closed halfspace bounded
by a supporting hyperplane $H$ for $C$.
\end{defn}

\begin{defn}\label{d:ruleset}
A finite subset $\Gamma \subset \ZZ^d \minus \{\0\}$ is a \emph{rule
set} if
\begin{enumerate}
\item%
there exists a linear function on $\RR^d$ that is positive on
$\Gamma \cup C \minus \{\0\}$; and
\item%
for each ray $\rho$ of $C$, some vector $\bg_\rho \in \Gamma$ lies
in the negative tangent cone $-T_\rho C$.
\end{enumerate}
\end{defn}

\begin{defn}\label{d:latticegame}
Given the polyhedral set $\Lambda = \Pi \cap \ZZ^d$, fix a rule set
$\Gamma$.
\begin{itemize}
\item%
A \emph{game board} $\cB$ is the complement in $\Lambda$ of a finite
$\Gamma$-order ideal in $\Lambda$ called the set of \emph{defeated
positions}.
\item%
A \emph{lattice game} is defined by a game board and a rule set.
\end{itemize}
\end{defn}

\begin{defn}[\textbf{Input complexity of a lattice game}]\label{d:complexity}
Let $(\Gamma,\cB)$ be a lattice game with rule set $\Gamma$ and game
board $\cB$. $\Gamma$ may be represented as a $d \times n$ matrix with
entries $\gamma_{ij}$ for $1 \le i \le d$ and $1 \le j \le n$, where
$n = |\Gamma|$.  The game board $\cB$ may be represented by the $m$
generators of the finite $\Gamma$-order ideal, hence a $d \times m$
matrix with entries $a_{ij}$ for $1 \le i \le d$ and $1 \le j \le
m$.  The \emph{input complexity} of the lattice game is the number of
bits needed to represent these $d(m + n)$ numbers, namely
$$%
  d(m + n) +
  \sum_{i=1}^d \Big(
  \sum_{j=1}^n \log_2| \gamma_{ij}| +
  \sum_{j=1}^m \log_2 |a_{ij}| \Big).
$$
\end{defn}

\section{Rational strategies as data structures}\label{s:rational}

\begin{defn}\label{d:ratgenfun}
For $A \subseteq \ZZ^d$, the \emph{generating function} for $A$ is the
formal series
$$%
  f(A;\ttt) = \sum_{\aa \in A} \ttt^\aa.
$$
\end{defn}

\begin{defn}\label{d:ratstrat}
A \emph{rational strategy} for a lattice game is a generating
function for the set of P-positions of the form
$$%
  f(A;\ttt) = \sum_{i \in I} \alpha_i
  \frac{ \ttt^{\pp_i}}{(1-\ttt^{\aa_{i1}})\cdots(1-\ttt^{\aa_{ik(i)}})},
$$
for some finite set $I$, nonnegative integers $k(i)$, rational numbers
$\alpha_i$, along with vectors $\pp_i,\aa_{ij} \in \ZZ^d$ and
$\aa_{ij} \ne \0$ for all $i,j$ \cite[Definition~8.1]{latticeGames}.
A rational strategy is \emph{short} if the number $|I|$ of indices is
bounded by a polynomial in the input complexity.
\end{defn}

\begin{defn}[\textbf{Complexity of short rational generating functions}]
Fix a positive integer~$k$.  Let $A \subseteq \ZZ^d$ and
$$%
 f(A;\ttt)
 =
 \sum_{i\in I}\alpha_i\frac{\ttt^{\pp_i}}{(1-\ttt^{\aa_{i1}})\cdots(1-\ttt^{\aa_{ik}})}
$$
for some vectors $\pp_i,\aa_{ij} \in \ZZ^d$ and $\aa_{ij} \ne 0$ for
all~$i,j$.  If $\pp_i = (p_{i1},\ldots,p_{id})$ and $\aa_{ij} =
(a_{ij1},\ldots,a_{ijd})$ for all~$i,j$, and $\alpha_i$ is given as a
ratio $\pm \frac{\alpha'_i}{\alpha''_i}$ of positive
integers, then the \emph{complexity} of $f(A;\ttt)$ is the number
$$%
  |I|(1 + d + kd) + \sum_{i \in I}\Big(\log_2 \alpha'_i + \log_2 \alpha''_i
  + \sum_{j=1}^d \log_2 |p_{ij}| +
  \sum_{j=1}^k \sum_{r=1}^d \log_2 |a_{ijr}| \Big).
$$
\end{defn}

\begin{defn}[{\cite[Definition~3.2]{BaWo03}}]
For Laurent power series
$$%
  f_1(\ttt) = \sum_{\aa \in \ZZ^d} \beta_{1\aa}\ttt^\aa
  \text{ \ and \ }
  f_2(\ttt) = \sum_{\aa \in \ZZ^d} \beta_{2\aa}\ttt^\aa
$$
in $\ttt \in \CC^d$, the \emph{Hadamard product} $f = f_1 \star f_2$
is the power series
$$%
  f(\ttt) = \sum_{\aa\in\ZZ^d}(\beta_{1\aa}\beta_{2\aa})\,\ttt^\aa.
$$
\end{defn}

\begin{lemma}\label{l:hadamard}
Fix $k$.  Let $A,B \subseteq \ZZ^d$ lie in a common pointed rational
cone~$C$.  If $f(A;\ttt)$ and $f(B;\ttt)$ are rational generating
functions with $\leq k$ denominator binomials in each, then there is
an algorithm for computing $f(A;\ttt) \star f(B;\ttt)$ as a rational
generating function in polynomial time in the complexity of the
generating functions.
\end{lemma}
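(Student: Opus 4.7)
The plan is to reduce directly to the Hadamard product algorithm of Barvinok and Woods~\cite{BaWo03}, which is tailor-made for this situation. Their Theorem~3.6 states that for a fixed bound $k$ on the number of binomials in any denominator, the Hadamard product of two short rational generating functions can be computed in polynomial time as another short rational generating function. So the real content of our lemma is to check that the hypotheses here align with those of their theorem, with the common pointed cone hypothesis playing the essential role.

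First I would fix a linear functional $\ell$ on $\RR^d$ that is positive on $C \minus \{\0\}$; such $\ell$ exists because $C$ is pointed. This $\ell$ provides a consistent direction in which to expand rational summands as Laurent series, ensuring that each summand $\alpha_i \ttt^{\pp_i} / \prod_j (1 - \ttt^{\aa_{ij}})$ in $f(A;\ttt)$ and $f(B;\ttt)$ can be rewritten in a form whose series expansion has support in (a translate of) the pointed cone dual to~$\ell$. The rewriting uses the elementary flip $1/(1 - \ttt^{\aa}) = -\ttt^{-\aa}/(1 - \ttt^{-\aa})$, applied once for each denominator binomial with $\ell(\aa_{ij}) < 0$; this doubles at most the number of summands and preserves the bound $k$ on the number of binomials per denominator, so short rationality is preserved with only a polynomial blow-up in complexity.

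Next I would feed the normalized short rational expressions for $f(A;\ttt)$ and $f(B;\ttt)$ into the Barvinok--Woods Hadamard algorithm \cite[Theorem~3.6]{BaWo03}. Because the sets $A$ and $B$ both lie in the common pointed cone~$C$, coefficient-wise multiplication of their expansions is well-defined and agrees with the Laurent-series product on each monomial, so the algorithm's output is indeed a short rational generating function for $f(A;\ttt) \star f(B;\ttt)$. The degree bounds and number of binomials in the output are controlled by polynomials in the complexity of the inputs and in~$d$, as quantified in \cite{BaWo03}; since $k$ is fixed, these bounds are polynomial in the input complexity.

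The only obstacle I anticipate is the preprocessing normalization in step one: one has to verify that rewriting the given rational expression into the canonical form used by Barvinok--Woods (a sum of simple fractions with series expansions supported in a fixed pointed cone) can be accomplished in polynomial time. This is where the common pointed cone hypothesis is indispensable, since without it no uniform expansion direction exists and the Hadamard product might not even make sense as a formal Laurent series. Once this normalization is in hand, the remainder is a direct appeal to the Barvinok--Woods machinery, and no further estimates beyond those already in \cite{BaWo03} are needed.
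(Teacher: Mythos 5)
Your proposal is correct and follows essentially the same route as the paper: fix a linear functional determined by the pointed cone $C$, normalize the denominator exponents to have consistent sign against it, and invoke the Barvinok--Woods Hadamard product machinery (the paper cites their Lemma~3.4 summand-by-summand plus bilinearity, while you cite their Theorem~3.6 directly, and you are in fact more explicit than the paper about the flip $1/(1-\ttt^{\aa}) = -\ttt^{-\aa}/(1-\ttt^{-\aa})$ needed to justify the assumption that all denominator exponents lie in $C$). One trivial slip: that flip rewrites each summand in place rather than doubling the number of summands, but this only strengthens your complexity estimate.
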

\begin{proof}
Choose an affine linear function $\ell$ that is negative on $C$.
Write
\begin{eqnarray*}
f(A;\ttt)
&=& \sum_{i \in I} \alpha_i
    \frac{\ttt^{\pp_i}}{(1-\ttt^{\aa_{i1}})\cdots(1-\ttt^{\aa_{ik}})}
\\
f(B;\ttt)
&=& \sum_{j \in J} \beta_i
    \frac{\ttt^{\qq_i}}{(1-\ttt^{\bb_{j1}})\cdots(1-\ttt^{\bb_{jk}})},
\end{eqnarray*}
where $\pp_i,\qq_i \in \ZZ^d$, $\aa_{ir},\bb_{jr} \in C$ for
all~$i,j,r$.  Since $\<\ell,\aa_{ir}\> < 0$ and $\<\ell,\bb_{jr}\> <
0$ for all $i,j,r$, by Lemma 3.4 of \cite{BaWo03} we can compute
$$%
  \frac{ \ttt^{\pp_i} }{(1-\ttt^{\aa_{i1}})\cdots(1-\ttt^{\aa_{ik}})}
  \star
  \frac{ \ttt^{\qq_i} }{(1-\ttt^{\bb_{j1}})\cdots(1-\ttt^{\bb_{jk}})}
$$
in polynomial time for each~$i,j$.  Since the Hadamard product is
bilinear, it follows that we can compute $f(A;\ttt) \star f(B;\ttt)$
in polynomial time as well.
\end{proof}

\begin{thm}\label{t:rational}
Any rational strategy for a lattice game produces algorithms for
\begin{itemize}
\item%
determining whether a position is a P-position or an N-position, and
\item%
computing a legal move to a P-position, given any N-position.
\end{itemize}
These algorithms run in polynomial time if the rational strategy is
short.
\end{thm}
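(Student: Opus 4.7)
The plan is to reduce both algorithms to Hadamard-product computations against the singleton generating function $\ttt^\pp$, and then to invoke the specialization machinery of Barvinok--Woods to detect whether the result is the zero series.

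First, for the P-versus-N test: given a candidate position $\pp \in \Lambda$, observe that the singleton $\{\pp\}$ has the trivial rational generating function $\ttt^\pp$, with no denominator binomials, so its complexity is linear in the encoding of $\pp$. Apply Lemma~\ref{l:hadamard} with this as the second input to compute $f(A;\ttt) \star \ttt^\pp$ in polynomial time as a short rational generating function. By construction this Hadamard product equals $\ttt^\pp$ when $\pp$ is a P-position and $0$ otherwise. Apply the Barvinok--Woods specialization-at-$\1$ procedure (the standard efficient limit that avoids the denominator singularities) to extract $|A \cap \{\pp\}| \in \{0,1\}$, which distinguishes the two cases in polynomial time.

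Second, for the winning-move computation: the rule set $\Gamma$ is part of the input, so $n = |\Gamma|$ is bounded by the input complexity. Iterate over $\bg \in \Gamma$; for each, first reject it if $\pp - \bg$ is not in the game board (a direct check against the finite generating set of the defeated ideal), and otherwise apply the subroutine above to $\pp - \bg$. Return the first $\bg$ that yields a P-position. If $\pp$ is an N-position, the axioms of a lattice game guarantee that some such $\bg$ exists, and the containment $A \subseteq \cB$ ensures the resulting move is legal. The procedure performs $O(n)$ calls to a polynomial-time subroutine, so it runs in polynomial time overall when the rational strategy is short.

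The main technical obstacle is arranging the hypothesis of Lemma~\ref{l:hadamard} that $A$ and $\{\pp\}$ sit in a common pointed rational cone. This is mild but not entirely cosmetic: by Definition~\ref{d:latticegame}, $A \subseteq \Lambda \subseteq \Pi$ lies in a translate of the pointed recession cone $C$, so after a common integer translation one puts both $A$ and $\pp$ into $C$ (or into a slightly enlarged pointed rational cone containing~$\pp$), and one can reject at the outset any $\pp \notin \Pi$. The other nonformal ingredient is the Barvinok--Woods specialization of a short rational generating function at $\ttt = \1$, which is what converts the symbolic Hadamard output into the integer $|A \cap \{\pp\}|$ that the decision procedure needs; once this is in place, the theorem is a composition of polynomial-time subroutines.
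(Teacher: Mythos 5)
Your proposal matches the paper's proof in essentially every respect: both reduce the membership test to computing $f(\cP;\ttt) \star \ttt^\pp$ via Lemma~\ref{l:hadamard}, observe that the result is $\ttt^\pp$ or $0$ according to whether $\pp$ is a P-position, and then find a winning move by iterating the test over all $\qq - \bg$ for $\bg \in \Gamma$. The additional care you take with testing the Hadamard output for vanishing via specialization at $\ttt = \1$ and with placing $\pp$ and $\cP$ in a common pointed cone are refinements of details the paper passes over silently, not a different argument.
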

\begin{proof}

Suppose we wish to determine whether $\pp \in \cB$ is a P-position or
an N-position.  Let $f(\cP;\ttt)$ be a rational strategy for the
lattice game.  By definition, $\cP$ and $\pp$ both lie in the
cone~$C$.  It follows from Lemma \ref{l:hadamard} that we can compute
$f(\cP \cap \pp;\ttt) = f(\cP;\ttt) \star \ttt^\pp$ in $O(\iota^c)$
time, where $\iota$ is the complexity of $f(\cP;\ttt)$ and $\ttt^\pp$,
and $c$ is some positive integer.  We get
$$%
  f(\cP \cap \pp;\ttt)
  =
  \left\{
    \begin{array}{ll}
      \ttt^\pp & \text{if } \pp \in \cP
      \\
          0    & \text{if } \pp \in \cN.
    \end{array}
  \right.
$$
Given an N-position~$\qq$, simply apply this algorithm to all
positions $\qq - \bg$ for each legal move $\bg \in \Gamma$.  Since
$\qq \in \cN$, at least one $\qq - \bg$ lies in $\cP$, hence this
procedure will end in $O(\iota^c|\Gamma|)$ time.
\end{proof}

\begin{remark}\label{r:dawson}
The eventual goal of this project is to solve \dawson.  That is, given
any position in \dawson, we desire efficient algorithms to determine
whether the next player to move has a winning strategy, and if so, to
find one.  This is equivalent to determining whether a given position
$\pp$ is a P-position or an N-position.  If $\pp \in \cN$, then the
next player to move indeed has a winning strategy by moving the game
to a P-position.  This is the problem of determining those $\bg \in
\Gamma$ for which $\pp - \bg$ lies in $\cP$.  By
Theorem~\ref{t:rational}, we can do all of this if we have a \dawson\
rational strategy for heaps of sufficient size.  Alas, it is not known
whether rational strategies exist for general squarefree games, or
even for \dawson.
\end{remark}

\begin{conj}
Every squarefree lattice game possesses a rational strategy.
\end{conj}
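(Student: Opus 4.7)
The plan is to reduce to the stronger Conjecture~\ref{c:latticeAffStrat}: if every squarefree lattice game admits an affine stratification, then by Theorem~\ref{t:strat} one can extract a rational strategy from the stratification, and we are done. So the real task is to construct an affine stratification for an arbitrary squarefree lattice game $(\Gamma,\cB)$.

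First I would separate by play convention. Under normal play, \cite[Theorem~6.11]{latticeGames} tightly constrains $\cP$, and \cite[Theorem~7.4]{latticeGames} already gives an efficient construction, so the normal-play case can be packaged as an explicit affine stratification without substantially new ideas. The genuine difficulty lies in mis\`ere play (and in the more general mis\`ere variants permitted by the lattice-game axioms).

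For mis\`ere play I would try to bootstrap from the normal-play answer. The defeated positions form a finite $\Gamma$-order ideal, so the mis\`ere P-set $\cP_{\mathrm{mis}}$ should differ from the normal-play set $\cP_{\mathrm{norm}}$ only inside a bounded ``transient'' region near the boundary, with the bulk of $\cP_{\mathrm{mis}}$ inheriting the polyhedral periodicity coming from the squarefree hypothesis. Concretely I would aim for a decomposition $\Lambda = B \sqcup \bigsqcup_i \bigl(\pp_i + (L_i \cap C)\bigr)$ in which $B$ is finite, each $L_i$ is a sublattice of $\ZZ^d$, and $\cP_{\mathrm{mis}}$ meets each translate $\pp_i + (L_i \cap C)$ in a translate of a fixed affine sub-semigroup. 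That is exactly an affine stratification, and Theorem~\ref{t:n-affinestrat} would then let one manipulate complements freely while maintaining the data structure.

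The main obstacle is making the previous paragraph work uniformly: a mis\`ere periodicity theorem analogous to \cite[Theorem~6.11]{latticeGames} is not known even for classical octal games such as \dawson, and Fink's results~\cite{fink11} show that without squarefreeness the conclusion fails spectacularly. Thus the squarefree hypothesis has to enter essentially, presumably through a combinatorial finiteness statement isolating the finite data near $\partial\cB$ that determines the eventual behavior of $\cP_{\mathrm{mis}}$ on each recession direction. I expect this finiteness step, rather than the polyhedral bookkeeping surrounding it, to be where the argument either succeeds or collapses back onto the original question.
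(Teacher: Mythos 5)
This statement is a \emph{conjecture} in the paper; the authors offer no proof of it, and in fact they explicitly note (Remark~\ref{r:dawson}) that rational strategies are not known to exist even for \dawson, and that Fink~\cite{fink11} has produced lattice games (non-squarefree ones) with no rational strategy at all. So there is no ``paper's own proof'' to compare against, and your proposal should be judged on whether it actually closes the question. It does not. What you have written is a reduction of this conjecture to Conjecture~\ref{c:latticeAffStrat} (every squarefree lattice game has an affine stratification), followed by a candid admission that the key step --- a mis\`ere periodicity or finiteness theorem for squarefree games --- is unknown. The reduction itself is sound and is exactly the logical relationship the paper intends: Theorem~\ref{t:strat} converts an affine stratification into a rational strategy, so Conjecture~\ref{c:latticeAffStrat} implies the present conjecture. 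But Conjecture~\ref{c:latticeAffStrat} is just as open, so nothing has been proved.

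The concrete gap is in your third paragraph: the assertion that $\cP_{\mathrm{mis}}$ differs from $\cP_{\mathrm{norm}}$ ``only inside a bounded transient region'' is precisely the hard, unproven content. The paper's normal-play structure theorem \cite[Theorem~6.11]{latticeGames} does not transfer to mis\`ere play (the paper says its algorithm ``fails to extend to mis\`ere play''), and for classical octal games the analogous statement is the long-open mis\`ere periodicity problem for \dawson. Your own final paragraph concedes this, which means the proposal is a research program, not a proof. If you want to present something rigorous here, the honest formulation is: ``Conjecture~\ref{c:latticeAffStrat} implies this conjecture, by Theorem~\ref{t:strat}'' --- a one-line observation --- and everything beyond that should be labeled as heuristic motivation rather than placed inside a proof environment.
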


It is known that arbitrary lattice games need not possess rational
strategies \cite{fink11}.  The smallest known counterexample is
on~$\NN^3$; its rule set has size~28.

\begin{remark}\label{r:dawson'}
The question, then, is how to find a rational strategy for \dawson.
Observe that a fixed lattice game structure only suffices to encode a
heap game for heaps of bounded size.  Let $G_n$ denote the lattice
game corresponding to \dawson\ with heaps of size at most~$n$.  If we
can find the rational strategy for any given~$n$, then this is good
enough, although we must be careful about the complexity of finding
such rational strategies as a function of~$n$.  In the next sections,
we shall see that affine stratifications serve as data structures from
which to extract the rational strategy in polynomial time.  Thus the
problem will be reduced to finding affine stratifications for $G_n$
for all~$n$, and there is hope that some regularity might arise, as
$n$ grows, to allow the possibility of computing them in time
polynomial in~$n$.
\end{remark}

\section{Affine stratifications as data structures}\label{s:affine}

\begin{defn}[{\cite[Definition~8.6]{latticeGames}}]\label{d:strat}
An \emph{affine stratification} of a subset $\mathcal{W} \subseteq
\ZZ^d$ is a partition
$$%
  \mathcal{W} = \biguplus_{i=1}^r W_i
$$
of $\mathcal{W}$ into a disjoint union of sets $W_i$, each of which is
a finitely generated module for an affine semigroup $A_i \subset
\ZZ^d$; that is, $W_i = F_i + A_i$, where $F_i \subset \ZZ^d$ is a
finite set.  An \emph{affine stratification of a lattice game} is an
affine stratification of its set of P-positions.
\end{defn}

The choice to require an affine stratification of~$\cP$, as opposed
to~$\cN$, may seem arbitrary, but in the end these are equivalent, due
to the following result.

\begin{thm}\label{t:n-affinestrat}
If $A$ and $B \subset A$ both possess affine stratifications, then $A
\minus B$ possesses an affine stratification.
\end{thm}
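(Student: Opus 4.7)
The plan is to reduce the theorem to a core combinatorial lemma about the complement of a submodule inside a single affine semigroup module, and then to establish that lemma via a Stanley-decomposition-style construction. Write the given stratifications as $A = \biguplus_{i=1}^r W_i$ with $W_i = F_i + A_i$ and $B = \biguplus_{j=1}^s V_j$ with $V_j = G_j + B_j$. Since the $V_j$ are pairwise disjoint and $B \subseteq A$, distributing the set difference yields
\[
A \minus B \;=\; \biguplus_i \Bigl( W_i \minus \biguplus_j (W_i \cap V_j) \Bigr),
\]
so it suffices to construct an affine stratification of $W_i \minus \bigcup_j (W_i \cap V_j)$ for each $i$.

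The next step is to show that each intersection $W_i \cap V_j$ can be presented as a finite union of translates of the affine semigroup $M := A_i \cap B_j$. Here $M$ is itself finitely generated, by Gordan's lemma applied to the subsemigroup of $\NN^{m+n}$ cut out by the equation $\phi(u) = \psi(v)$ for the defining maps $\phi\colon \NN^m \to \ZZ^d$ and $\psi\colon \NN^n \to \ZZ^d$. For each $(f, g) \in F_i \times G_j$, the intersection $(f + A_i) \cap (g + B_j)$ is the restriction to the lattice of an intersection of two translated pointed cones; Dickson's lemma applied to its minimal elements under the partial order $\le_M$ produces a finite set of basepoints whose $M$-translates cover this intersection. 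The resulting union may overlap, so to obtain a genuine affine stratification I would iteratively remove one translate at a time, which reduces (via $X \minus V = \biguplus_k (W'_k \minus V)$ when $X = \biguplus_k W'_k$) to the single-module case: for single modules $W = F + A'$ and $V = G + B'$, stratify $W \minus V$.

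The main obstacle is this single-module core, which after writing $W \minus V = W \minus (W \cap V)$ and invoking the intersection analysis reduces further to the following Stanley-decomposition statement: given a translate $q + M$ of a sub-affine-semigroup $M \subseteq A'$ contained in a single module $p + A'$, the complement $(p + A') \minus (q + M)$ admits an affine stratification into translates of sub-affine-semigroups of $A'$. I would prove this by inducting on the rank of $\ZZ M$ inside $\ZZ A'$: Dickson's lemma gives a finite set $P^*$ of minimal elements of the complement under $\le_{A'}$. For each $p^* \in P^*$, one identifies a sub-affine-semigroup $N_{p^*} \subseteq A'$, determined by the position of $p^*$ relative to $q + M$, such that $p^* + N_{p^*}$ lies entirely in $(p + A') \minus (q + M)$; a careful argument using the polyhedral structure of the cones $\RR_{\ge 0} A'$ and $\RR_{\ge 0} M$ arranges these translates to cover the complement, and any residual overlaps are stratified recursively via the inductive hypothesis on a proper sub-lattice. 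Granted this core lemma, the theorem follows by assembling the reductions above.
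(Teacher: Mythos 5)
Your global reductions are fine as far as they go---distributing the set difference over the disjoint strata of $A$ and $B$, and presenting each intersection $W_i \cap V_j$ as a finitely generated module over $A_i \cap B_j$ via Gordan's lemma applied to $\{(u,v) \in \NN^{m+n} : \phi(u) = \psi(v)\}$---but the argument has a genuine gap at exactly the point where all the difficulty lives: the ``core lemma'' that $(p + A') \minus (q+M)$ admits an affine stratification. You assert that for each $\le_{A'}$-minimal element $p^*$ of the complement one can find a subsemigroup $N_{p^*} \subseteq A'$ with $p^* + N_{p^*}$ contained in the complement, and that ``a careful argument using the polyhedral structure of the cones'' arranges these translates to cover the complement. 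That covering claim is the entire theorem in miniature and is not proved; moreover, for non-normal semigroups the polyhedral data do not control membership (one can have $\RR_{\geq 0}M = \RR_{\geq 0}A'$ and $\ZZ M = \ZZ A'$ while $M$ is a proper subsemigroup of $A'$), so cone geometry alone cannot decide which translates $p^* + N_{p^*}$ avoid $q+M$, and your induction ``on the rank of $\ZZ M$ inside $\ZZ A'$'' has no visible base case or decreasing quantity when the ranks coincide. A second, smaller gap: the iterative removal of one translate at a time does not actually reduce to the single-module core as you state it, because after the first removal the ambient set is a union of modules over various subsemigroups $N \subseteq A'$, and the next translate of $M$ to be removed is neither a translate of a subsemigroup of $N$ nor contained in any single stratum.

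The paper's proof avoids both problems with two tools your proposal is missing. First, it invokes the normalization theorem \cite[Theorem~2.6]{affineStrat}: any affinely stratified set can be rewritten as a finite \emph{disjoint} union of translates of \emph{normal} affine semigroups. Once everything is normal, a translated semigroup is literally a polyhedron intersected with a lattice coset, so removal can be carried out by honestly polyhedral ``carving'' along facets (Lemma~\ref{l:carving}); this is precisely the step your ``careful argument'' would otherwise have to supply. Second, instead of removing the pieces of $B$ one at a time, the paper writes $A \minus \bigcup_i B_i = \bigcap_i (A \minus B_i)$ and proves separately that an intersection of affinely stratified sets is affinely stratified (Lemma~\ref{l:intersectaffinestrat}), which sidesteps the changing-ambient-semigroup problem in your iteration. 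If you wish to salvage your route, the minimal repair is to reduce to normal strata via the normalization theorem before attempting the core lemma, and to replace the iterated removal by the De~Morgan identity together with an intersection lemma.
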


The plan for Theorem~\ref{t:n-affinestrat} is to show that removing a
translated normal affine semigroup (an affine semigroup is
\emph{normal} if it is the intersection of a real cone with a lattice;
see \cite[Chapter~7]{cca}) from a normal affine semigroup yields an
affinely stratified set, and intersecting affinely stratified sets
results in an affinely stratified~set.

\begin{lemma}\label{l:carving}
Suppose $B$ is the intersection of a rational convex polyhedron and a
subgroup of $\ZZ^d$.  If $A$ is a normal affine semigroup and $\bb + A
\subset B$ for some $\bb \in B$, then $B \minus (\bb + A)$ has an
affine stratification.
\end{lemma}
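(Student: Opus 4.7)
The plan is to split $B \minus (\bb + A)$ along the boundary of the shifted cone $\bb + \sigma$, where $\sigma$ is the rational cone with $A = \sigma \cap L$ for a sublattice~$L$, and to stratify each piece separately:
\[
  B \minus (\bb + A) \;=\; \bigl(B \minus (\bb + \sigma)\bigr) \sqcup \bigl((B \cap (\bb + \sigma)) \minus (\bb + A)\bigr).
\]
The geometric fact that makes this work is the containment $\bb + \sigma \subseteq P$, where $B = P \cap G$: since $\bb + A \subseteq P$ and $\sigma$ is the closed cone generated by $A$, the recession cone of $P$ must contain each ray through a point of $A$, hence contains $\sigma$; combined with $\bb \in P$ this forces $\bb + \sigma \subseteq P$.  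This containment is what pins the cone-interior piece of the decomposition to $B$ rather than to some larger polyhedron.

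For the cone-interior piece, I would first replace $L$ by $L \cap G$, which leaves $A = \sigma \cap L$ unchanged (since $A \subseteq G$), and set $L' = G \cap \mathrm{span}(\sigma)$, a sublattice of $G$ containing $L$ with finite index.  Then $B \cap (\bb + \sigma) = (\bb + \sigma) \cap G = \bb + (\sigma \cap L')$.  Choosing coset representatives $\0 = \mathbf{d}_1, \mathbf{d}_2, \ldots, \mathbf{d}_N$ for $L'/L$ and applying the rational-polyhedron form of Gordan's lemma to the translated cone $\sigma - \mathbf{d}_k$ yields finite sets $F_k$ with $\sigma \cap (\mathbf{d}_k + L) = F_k + A$ for each $k$, and $F_1 + A = A$.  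Since the cosets are disjoint, removing the $k = 1$ piece from $\sigma \cap L' = \bigsqcup_k (F_k + A)$ leaves the affine stratification
\[
  \bigl(B \cap (\bb + \sigma)\bigr) \minus (\bb + A) \;=\; \bigsqcup_{k=2}^N (\bb + F_k + A).
\]

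For the cone-exterior piece $B \minus (\bb + \sigma)$, the complement $\RR^d \minus (\bb + \sigma)$ decomposes into a finite disjoint union of half-open rational regions, determined by the facet hyperplanes of $\sigma$ together with (if $\sigma$ is not full-dimensional in $G \otimes \RR$) the hyperplanes cutting out $\mathrm{span}(\sigma)$; after replacing each strict integer inequality $\ell(x) > c$ by $\ell(x) \geq c + 1$, these regions have the same integer points as closed rational polyhedra.  Intersecting each with $P$ produces a closed rational polyhedron whose integer points in $G$ have the form $F + M$ for a finite set $F$ and a normal affine semigroup $M$ (the intersection of the recession cone with $G$), by the standard Gordan-lemma decomposition of rational polyhedra.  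Concatenating with the previous stratification completes the proof.  The main potential pitfall is the lattice bookkeeping in the cone-interior step, namely tracking the chain $L \subseteq L' \subseteq G$ and verifying that Gordan's lemma applies to each of the $[L':L]$ cosets; no individual step is conceptually hard, but it is easy to miscount cosets when $\sigma$ fails to span $G \otimes \RR$.
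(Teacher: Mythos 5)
Your proof is correct and follows essentially the same strategy as the paper's: split $B \minus (\bb+A)$ along the cone $\bb + \RR_{\geq 0}A$, handle the exterior by carving it into polyhedral pieces whose lattice points form finitely generated modules over affine semigroups, and handle the interior via the finite-index coset decomposition of the ambient lattice relative to $\ZZ A$. The only differences are presentational---you perform the carving as one simultaneous disjoint decomposition rather than iteratively, and you make explicit the containment $\bb + \sigma \subseteq P$ and the lattice bookkeeping that the paper leaves implicit.
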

\begin{proof}
First we assume that $\bb = \0$ and that $B$ is a normal affine
semigroup and $\RR_{\geq 0} A = \RR_{\geq 0} B$.  Since $A \subset B$,
that means $\ZZ A$ is a sublattice of $\ZZ B$ in $\ZZ^d$, hence $B$
can be written as a finite disjoint union of cosets of $A$.

Now, suppose $B$ is an arbitrary intersection of a rational convex
polyhedron $\Pi_B$ and a lattice $L$, and $\bb \in B$ is arbitrary.
We will reduce to the previous case by ``carving'' away pieces of $B$
that do not lie in $\RR_{\geq 0}A$.  Suppose $\RR_{\geq 0}A$ has a
facet (a $(d-1)$-dimensional face) which is not contained in a facet
of $\Pi_B$.  Let $H$ be the bounding hyperplane of this facet and
$H_-$ the corresponding negative halfspace (the half that is outside
of $\RR_{\geq 0}A$).  Then $H_- \cap \Pi_B$ is a rational convex
polyhedron.  to reduce the number of facets of $\RR_{\geq 0}A$ which
do not lie in a facet of $\Pi_B$.  Thus we have ``carved out'' a piece
$H_- \cap \Pi_B$ of $\Pi_B$.  By \cite[Lemma~2.4] {affineStrat}, $H_-
\cap \Pi_B \cap L$ is a finitely generated module over an affine
semigroup.  Now replace $\Pi_B$ with $\Pi_B \minus H_-$ and repeat.
Each time we repeat the argument, we carve out a piece of the original
$\Pi_B$ which has an affine stratification, and furthermore we reduce
the number of facets of $\RR_{\geq 0}A$ that do not lie in the current
$\Pi_B$.  Eventually we reduce to the case where each facet of
$\RR_{\geq 0}A$ lie in some facet of $\Pi_B$, which is actually the
first case above where $\Pi_B$ is a cone and $\bb = \0$.  By
\cite[Corollary~2.8]{affineStrat}, the union of these pieces possesses
an affine stratification.

There is a degenerate case when $A$ is not $d$-dimensional, but then
we may reduce to a lower dimension by carving away $\ZZ^d \minus A$.
\end{proof}

\begin{lemma}\label{l:intersectaffinestrat}
If $\cW$ and $\cW'$ have affine stratifications, then $\cW \cap \cW'$
has an affine stratification.
\end{lemma}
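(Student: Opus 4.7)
The plan is to use the given affine stratifications of $\cW$ and $\cW'$ to decompose $\cW \cap \cW'$ disjointly into pieces that can each be handled by prior results from \cite{affineStrat}. Writing the stratifications as $\cW = \biguplus_{i \in I}(F_i + A_i)$ and $\cW' = \biguplus_{j \in J}(F'_j + A'_j)$, both disjoint by hypothesis, the intersection is automatically a disjoint union
$$
  \cW \cap \cW' = \biguplus_{(i,j) \in I \times J}\bigl((F_i + A_i) \cap (F'_j + A'_j)\bigr),
$$
so it suffices to affinely stratify a single piece $(F + A) \cap (F' + A')$ for finite sets $F, F' \subset \ZZ^d$ and affine semigroups $A, A'$.

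Next, since $F$ and $F'$ are finite,
$$
  (F + A) \cap (F' + A') = \bigcup_{(\mathbf{f}, \mathbf{f}') \in F \times F'}\bigl((\mathbf{f} + A) \cap (\mathbf{f}' + A')\bigr),
$$
an a~priori overlapping union of finitely many pieces. Invoking the fact that a finite union of affinely stratified sets is affinely stratified (\cite[Corollary~2.8]{affineStrat}, already used inside Lemma~\ref{l:carving}), it remains only to stratify a single pairwise intersection $(\mathbf{f} + A) \cap (\mathbf{f}' + A')$ of two translated affine semigroups.

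For this core step, first reduce to the case where $A$ and $A'$ are normal, exploiting the classical fact that any affine semigroup $A$ is a finite disjoint union of translates of a normal sub-semigroup---for instance, by taking the normalization $\bar A = \ZZ A \cap \RR_{\geq 0} A$ together with the conductor ideal of $\bar A$ in~$A$, and refining by the finitely many cosets that separate $A$ from~$\bar A$. Once $A$ and $A'$ are normal, $\mathbf{f} + A = (\mathbf{f} + C) \cap \ZZ^d$ and $\mathbf{f}' + A' = (\mathbf{f}' + C') \cap \ZZ^d$, where $C = \RR_{\geq 0} A$ and $C' = \RR_{\geq 0} A'$ are rational polyhedral cones. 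Then
$$
  (\mathbf{f} + A) \cap (\mathbf{f}' + A') = \bigl((\mathbf{f} + C) \cap (\mathbf{f}' + C')\bigr) \cap \ZZ^d
$$
is the intersection of a rational convex polyhedron with a lattice, hence a finitely generated module over an affine semigroup by \cite[Lemma~2.4]{affineStrat}---an affine stratification with a single stratum.

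The main obstacle is the normal-reduction step: one must be sure that each module $F + A$ admits a refinement into an affine stratification whose strata are modules over \emph{normal} affine semigroups, so that the polyhedron-intersect-lattice form of \cite[Lemma~2.4]{affineStrat} becomes applicable. This is standard affine semigroup theory but demands some care with the conductor and coset bookkeeping. Everything else reduces to formal manipulation of disjoint unions and direct application of two prior results from~\cite{affineStrat}.
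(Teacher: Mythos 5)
Your overall route is the paper's route: reduce to pairwise intersections of translates of normal affine semigroups, realize each such intersection as (polyhedron) $\cap$ (lattice coset), invoke \cite[Lemma~2.4]{affineStrat} to get a finitely generated module, and reassemble with \cite[Corollary~2.8]{affineStrat}. (The paper shortcuts your $F\times F'$ splitting and your conductor argument by citing \cite[Theorem~2.6]{affineStrat}, which directly rewrites $\cW$ and $\cW'$ as disjoint unions of translates of normal affine semigroups; that is only an organizational difference.)

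There is, however, one genuine error in your core step: the identity $\mathbf{f}+A=(\mathbf{f}+C)\cap\ZZ^d$ is false in general. Normality of $A$ means $A=\ZZ A\cap\RR_{\geq 0}A$, where $\ZZ A$ is the group generated by $A$, which can be a proper sublattice of $\ZZ^d$ (or have lower rank). For example, $A=2\NN$ and $A'=3\NN$ in $\ZZ^1$ are both normal, and $A\cap A'=6\NN$, whereas your formula yields $(\RR_{\geq 0}\cap\RR_{\geq 0})\cap\ZZ=\NN$; so as written your argument stratifies the wrong (strictly larger) set. The correct statement is $\mathbf{f}+A=(\mathbf{f}+C)\cap(\mathbf{f}+\ZZ A)$, and the missing ingredient is the observation---which is exactly the lattice-coset computation the paper carries out---that $(\mathbf{f}+\ZZ A)\cap(\mathbf{f}'+\ZZ A')$ is either empty or a single coset of $\ZZ A\cap\ZZ A'$ (translate by any common point $\aa$ to see this). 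With that replacement the piece $(\mathbf{f}+A)\cap(\mathbf{f}'+A')$ is again a polyhedron intersected with a lattice coset and \cite[Lemma~2.4]{affineStrat} applies, so the gap is localized and fixable, but it must be filled.
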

\begin{proof}
By \cite[Theorem~2.6]{affineStrat}, we may write
$$%
  \cW = \biguplus_{i=1}^r W_i \text{\ \ \ and \ \ \ } \cW' =
  \biguplus_{j=1}^s W'_i
$$
where each $W_i$ and $W'_j$ is a translate of a normal affine
semigroup.  Therefore, it suffices to show that the intersection of a
translate of a normal affine semigroup with a translate of another
normal affine semigroup has an affine stratification, for the union of
all of these intersections would then have an affine stratification,
by \cite[Corollary~2.8]{affineStrat}.

Suppose our two translates are $\aa_1 + A_1$ and $\aa_2 + A_2$.  If
their intersection is empty, then trivially it has an affine
stratification, so we may assume that there is some $\aa \in (\aa_1 +
A_1) \cap (\aa_2 + A_2)$.  Then $\aa_1 - \aa + \ZZ A_1 = \ZZ A_1$ and
$\aa_2 - \aa + \ZZ A_2 = \ZZ A_2$.  Therefore
\begin{eqnarray*}
(\aa_1 + \ZZ A_1) \cap (\aa_2 + \ZZ A_2) &=&
\aa + (\aa_1 - \aa + \ZZ A_1) \cap (\aa_2 - \aa + \ZZ A_2) \\
&=& \aa + (\ZZ A_1 \cap \ZZ A_2),
\end{eqnarray*}
i.e., the intersection of the cosets is itself a coset of a
lattice.  Moreover, the intersection $(\aa_1 + \RR_{\geq 0} A_1) \cap
(\aa_2 + \RR_{\geq 0} A_2)$ is a
polyhedron.  By~\cite[Lemma~2.4]{affineStrat}, since $A_1$ and $A_2$
are normal, we have
\begin{align*}
(\aa_1 + A_1) \cap (\aa_2 + A_2)
&= ((\aa_1 + \RR_{\geq 0} A_1) \cap (\aa_1 + \ZZ A_1)) \cap ((\aa_2 +
   \RR_{\geq 0} A_2) \cap (\aa_2 + \ZZ A_2))
\\
&= ((\aa_1 + \RR_{\geq 0} A_1) \cap (\aa_2 + \RR_{\geq 0} A_2)) \cap
   ((\aa_1 + \ZZ A_1) \cap (\aa_2 + \ZZ A_2))
\end{align*}
is an intersection of a polyhedron with a coset of a lattice and hence
is a finitely generated module over an affine semigroup.  In
particular, the intersection has an affine stratification.
\end{proof}

\begin{trivlist}\item\textbf{Proof of Theorem~\ref{t:n-affinestrat}.}
First, assume $A$ is a normal affine semigroup.  Suppose
$$%
  B = \biguplus_{i=1}^r B_i
$$
where each $B_i$ is a translate of a normal affine semigroup.  By
Lemma~\ref{l:carving}, each $A \minus B_i$ has an affine
stratification.  Therefore, by Lemma~ \ref{l:intersectaffinestrat}, $A
\minus B = A \minus (\biguplus_{i=1}^r B_i) = \bigcap_{i=1}^r (A
\minus B_i)$ has an affine stratification.  For the general case where
$A$ has an affine stratification, each $A_i$ reduces to the previous
case, and then we obtain the result by taking the union.\hfill$\Box$
\end{trivlist}\smallskip

\begin{conj}\label{c:latticeAffStrat}
Every squarefree lattice game possesses an affine stratification.
\end{conj}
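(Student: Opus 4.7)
The plan is to attack the conjecture in two stages, drawing on the known case of normal play and on the closure properties of affinely stratified sets developed above. In normal play, the set $\cP_N$ of P-positions of a squarefree lattice game already admits an affine stratification by Theorem~7.4 of \cite{latticeGames}, the proof of which produces the stratification explicitly from the rule set and the defeated positions. The task is therefore to handle mis\`ere play---and more generally the mis\`ere axioms permitted by lattice games---under the squarefree hypothesis.

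For the mis\`ere case, the rule set $\Gamma$ and the recession cone $C$ are unchanged; only the set of defeated positions, and hence the game board $\cB$, is altered in a finite neighborhood. Writing $\cP_M$ for the mis\`ere P-position set, I would try to exhibit a partition $\Lambda = T \uplus \Lambda'$ with $T$ finite, such that $\cP_M \cap \Lambda'$ is a finite union of translates of pieces of $\cP_N$ indexed by a suitable coset structure in $\ZZ^d$. Granting such a decomposition, the conjecture follows routinely from the tools developed in this section: $T$ is finite and trivially affinely stratified, $\cP_N \cap \Lambda'$ inherits an affine stratification from the normal-play case via Lemma~\ref{l:intersectaffinestrat}, and the disjoint union assembles into the desired stratification, with Theorem~\ref{t:n-affinestrat} available if any overlap must be subtracted.

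The main obstacle is the existence of such a stable region $\Lambda'$. The P-position recursion, in which $\pp$ lies in $\cP$ exactly when $\pp - \bg$ lies outside $\cP$ for every legal $\bg \in \Gamma$, can in principle propagate a local modification of the defeated set indefinitely; indeed Fink's counterexample \cite{fink11} shows that for arbitrary lattice games this propagation destroys all periodicity. Any successful proof must therefore exploit the squarefree hypothesis in earnest, most likely by combining the fine structure of normal-play P-positions from Theorem~6.11 of \cite{latticeGames} with a quantitative analysis of how mis\`ere perturbations ripple along each extremal ray of $C$. Resolving this obstacle for \dawson\ would, via Remark~\ref{r:dawson}, settle a long-standing open problem, so the approach outlined here is best viewed as a roadmap rather than an imminent proof.
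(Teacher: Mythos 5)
You should first note that the statement you were asked to prove is presented in the paper as Conjecture~\ref{c:latticeAffStrat} and is left \emph{open} there: the authors give no proof, only the supporting closure results of Section~\ref{s:affine} (Lemma~\ref{l:intersectaffinestrat}, Theorem~\ref{t:n-affinestrat}) and the observation that the non-squarefree analogue is false by Fink's counterexample \cite{fink11}. So there is no argument of the authors' to compare yours against, and the only question is whether your proposal closes the conjecture. It does not.

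The genuine gap is the decomposition $\Lambda = T \uplus \Lambda'$ with $T$ finite and $\cP_M \cap \Lambda'$ expressed through finitely many translates of pieces of the normal-play set $\cP_N$. This is not a technical step to be discharged by the lemmas of Section~\ref{s:affine}; it is essentially the entire content of the conjecture. The closure properties you invoke (finite sets are trivially stratified, intersections and complements of stratified sets are stratified) show only that \emph{if} such a stable region exists then an affine stratification follows; they give no purchase on why the mis\`ere perturbation of the defeated set should stop propagating outside a finite region, and Fink's example shows that the P-position recursion can carry such a perturbation out to infinity in an aperiodic way. Your outline names the squarefree hypothesis as the ingredient that must prevent this, but never actually uses it, so the proposal reduces the conjecture to a statement that is at least as hard as the conjecture itself. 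Note also that the conjecture as stated in the paper covers normal play alongside the mis\`ere variants, so even the base case you treat as settled input is, strictly speaking, part of what is being conjectured. Your diagnosis of where the difficulty lies is accurate and consistent with the paper's own discussion, but what you have written is a research plan, not a proof.
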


\begin{example}
Consider again the game of \nim\ with heaps of size at most~$2$.  An
affine stratification for this game is $\cP = 2\NN^2$; that is, $\cP$
consists of all nonnegative integer points with both coordinates even.
\end{example}

\begin{example}
The mis\`ere lattice game on~$\NN^5$ whose rule set forms the
columns~of
$$%
\newcommand\ph{\phantom{-}}
\Gamma = \left[
         \begin{array}{@{}*{8}{r@{\ \,}}@{}r}
           \!\ph1&\ph0&\ph0&\ph0&-1& 0& 0& 0\\
           \!   0&   1&   0&   0& 1&-1& 0& 0\\
           \!   0&   0&   0&   0& 0& 1&-1& 0\\
           \!   0&   0&   1&   0& 0& 0& 1&-1\\
           \!   0&   0&   0&   1& 0& 0& 0& 1
         \end{array}
         \right]
$$
was one of the motivations for the definitions in \cite{latticeGames}
because the illustration of the winning positions in this lattice game
provided by Plambeck and Siegel \cite[Figure~12]{misereQuots} possesses
an interesting description as an affine stratification.  An explicit
description can be found in \cite[Example~8.13]{latticeGames}.
\end{example}

In what follows, we define the complexity of an affine stratification
to be the complexity of the generators and the affine semigroups
involved.  Roughly speaking, the complexity of an integer~$k$ is its
binary length (more precisely, $1 + \ceil{\log_2 k}$), so the
complexity is roughly the sum of the binary lengths of the integer
entries of the generators in the finite sets~$F_i$ and the
coefficients of the vectors generating the affine semigroups~$A_i$;
see \cite[Section~2]{Bar06} for additional details.  To say that an
algorithm is \emph{polynomial time when the dimension~$d$ is fixed}
means that the running time is bounded by $\iota^{\phi(d)}$ for some
fixed function~$\phi$, where $\iota$ is the complexity.

\begin{defn}[\textbf{Complexity of an affine semigroup}]
Fix an affine semigroup $A = \NN\{\aa_1,\ldots,\aa_n\}$ in $\ZZ^d$.
Let $\aa_i = (a_{1i},\ldots,a_{di})$.  Then $A$ may be represented by a
$d \times n$ matrix with entries $a_{ij}$.  The \emph{complexity} of
$A$ is the number of bits needed to represent these $dn$ numbers,
which is equal to
$$%
  dn + \sum_{i=1}^n\sum_{j=1}^d \log_2 |a_{ij}|.
$$
\end{defn}

\begin{defn}[\textbf{Complexity of an affine stratification}]
Let
$$%
  \cP = \biguplus_{i=1}^r W_i
$$
be an affine stratification, where $W_i = F_i + A_i$ for some affine
semigroup $A_i \subset \ZZ^d$ and finite set $F_i \subset \ZZ^d$.  Let
$m_i = |F_i|$ and $F_i = \{\bb_{i1},\ldots,\bb_{im_i}\}$ where
$\bb_{ij} = (b_{ij1},\ldots,b_{ijd})$, and let $A_i =
\NN\{\aa_{i1},\ldots,\aa_{in}\}$, where $\aa_{ij} = (a_{ij1},
\ldots,a_{ijd})$.  The \emph{complexity} of the affine stratification
is the number
$$%
  d\Big(nr + \sum_{i=1}^r m_i\Big)
  + \sum_{i=1}^r \sum_{s=1}^d
    \Big(\sum_{j=1}^n\log_2|a_{ijs}| + \sum_{j=1}^{m_i}\log_2|b_{ijs}|\Big)
$$
of bits needed to represent each $F_i$ and $A_i$.
\end{defn}

\begin{remark}
The existence of affine stratifications as in
\cite[Conjecture~8.9]{latticeGames} is equivalent to the same
statement with the extra hypothesis that the rule set generates a
saturated (also known as ``normal'') affine semigroup.  There are also
a number of ways to characterize the existence of affine
stratifications, in general \cite[Theorem~2.6]{affineStrat}, using
various combinations of hypotheses such as normality of the affine
semigroups involved, or disjointness of the relevant unions.  However,
some of these freedoms increase complexity in untamed ways, and are
therefore unsuitable for efficient algorithmic purposes.
Definition~\ref{d:strat} characterizes the notion of affine
stratification in the most efficient terms, where algorithmic
computation of rational strategies is concerned; allowing the unions
to overlap would make it easier to find affine stratifications, but
harder to compute rational strategies from them.
\end{remark}

\section{Computing rational strategies from affine stratifications}\label{s:compute}

In this section, we prove the following.

\begin{thm}\label{t:strat}
A rational strategy can be algorithmically computed from any affine
stratification, in time polynomial in the input complexity of the
affine stratification when the dimension~$d$ is fixed and the numbers
of module generators over the semigroups $A_i$ are uniformly bounded
above.
\end{thm}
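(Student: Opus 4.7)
The plan is to compute the generating function $f(\cP;\ttt)$ of the P-positions directly as a short rational generating function from the stratification data; by Definition~\ref{d:ratstrat} this is precisely a rational strategy. Since the stratification $\cP = \biguplus_{i=1}^r W_i$ is disjoint, additivity yields $f(\cP;\ttt) = \sum_{i=1}^r f(W_i;\ttt)$, and it suffices to compute each $f(W_i;\ttt)$ in polynomial time: summing then costs only an additional factor of~$r$, which is already part of the input size.

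For each stratum $W_i = F_i + A_i$, the strategy is to compute $f(A_i;\ttt)$ first and then assemble $f(W_i;\ttt)$ from it. After reducing to the case where $A_i$ is a \emph{normal} affine semigroup, which can be arranged by refining the stratification using the structural result \cite[Theorem~2.6]{affineStrat} that underlies the proofs of Lemmas~\ref{l:carving} and~\ref{l:intersectaffinestrat}, the set $A_i = \RR_{\geq 0}A_i \cap \ZZ A_i$ becomes the full set of lattice points in a rational polyhedral cone in $\ZZ^d$. Barvinok's algorithm \cite{BaWo03,Bar06} then computes $f(A_i;\ttt)$ as a short rational generating function in polynomial time in the number of semigroup generators, once $d$ is fixed.

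To pass from $f(A_i;\ttt)$ to $f(W_i;\ttt)$, write $W_i = \bigcup_{\bb \in F_i}(\bb + A_i)$ and apply inclusion--exclusion. Since $|F_i|$ is bounded by a uniform constant~$M$, the resulting signed sum has at most $2^M$ terms, each a generating function of an intersection $\bigcap_{\bb \in S}(\bb + A_i)$; by the argument used in the proof of Lemma~\ref{l:intersectaffinestrat}, every such intersection is empty or a translate of an affine sub-semigroup of $A_i$ and admits a short rational generating function computable in polynomial time via the Barvinok--Woods intersection machinery \cite{BaWo03}. Combined with the prefactors $\ttt^\bb$ for $\bb\in F_i$, this produces $f(W_i;\ttt)$ in polynomial time, and summing over $i = 1, \ldots, r$ produces $f(\cP;\ttt)$ with at most $r \cdot 2^M$ summands, which is polynomial in the input under the stated hypotheses.

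The main obstacle is the reduction to normal affine semigroups: one must verify that refining the given stratification into translates of normal affine semigroups inflates both the number of strata and the bit-lengths of the generators by at most a polynomial factor when $d$ and $M$ are held fixed. Once that complexity bookkeeping is carried out, the remaining steps are direct applications of the Barvinok and Barvinok--Woods rational-generating-function calculus, and the resulting $f(\cP;\ttt)$ is short in the sense of Definition~\ref{d:ratstrat}.
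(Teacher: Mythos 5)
Your outer skeleton agrees with the paper's: by disjointness $f(\cP;\ttt)=\sum_{i=1}^r f(W_i;\ttt)$ (Corollary~\ref{c:union}), and each $f(W_i;\ttt)$ is assembled from $f(A_i;\ttt)$ by taking the union of the translates $\bb+A_i$ for $\bb\in F_i$, with the $2^{|F_i|}$ blow-up from the union computation absorbed by the uniform bound on $|F_i|$ (Lemma~\ref{l:module}). The divergence is in how $f(A_i;\ttt)$ itself is obtained. The paper does \emph{not} reduce to normal semigroups: Lemma~\ref{l:affgenfun} computes the generating function of an arbitrary pointed affine semigroup with a bounded number of generators by embedding it in $\NN^d$ and invoking the semigroup machinery of \cite[Section~7.3]{BaWo03} (the projection theorem), which is precisely what lets the non-normal case go through with no change in complexity. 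Relatedly, the paper never needs your structural claim that $\bigcap_{\bb\in S}(\bb+A_i)$ is a translate of an affine semigroup (the argument of Lemma~\ref{l:intersectaffinestrat} you cite for this uses normality); it computes all intersections purely at the level of generating functions via Hadamard products (Lemmas~\ref{l:hadamard} and~\ref{l:union}).

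The step you yourself flag as ``the main obstacle''---refining the given stratification so that every $A_i$ becomes normal---is a genuine gap, not just unfinished bookkeeping. \cite[Theorem~2.6]{affineStrat} is an existence statement with no size or complexity control, and the decompositions it rests on are of the kind the paper deliberately confines to the existence results of Section~\ref{s:affine}: for instance, writing a semigroup as a disjoint union of cosets of a sublattice (first paragraph of the proof of Lemma~\ref{l:carving}) produces a number of pieces equal to a lattice index, i.e.\ a determinant, which is exponential in the bit complexity of the generators. Concretely, already for $d=1$ and $A=\NN\{a,b\}$ with $\gcd(a,b)=1$, the natural disjoint decomposition into translates of normal subsemigroups of $\ZZ$ is the Ap\'ery-type partition $\biguplus_{i=0}^{a-1}(w_i+a\NN)$ with $\min(a,b)$ pieces, exponential in the input size $\log_2 a+\log_2 b$; no polynomially sized refinement is provided or known. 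So the route through normalization does not yield the claimed polynomial bound. The repair is to drop the normalization entirely and compute $f(A_i;\ttt)$ directly for the possibly non-normal pointed semigroup, as in Lemma~\ref{l:affgenfun}; with that substitution the rest of your argument matches the paper's proof.
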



The proof of the theorem requires a few intermediate results, the
point being simply to keep careful track of the complexities of the
constituent elements of affine stratifications.

\begin{lemma}\label{l:union}
Fix $k,d \in \NN$.  Let $A, B \subseteq \ZZ^d$ lie in the same pointed
rational cone~$C$.  If $f(A;\ttt)$ and $f(B;\ttt)$ are short rational
generating functions with $\le k$ binomials in their denominators,
then for some $c \in \NN$ there is an $O(\iota^c)$ time algorithm for
computing the rational function $f(A \cup B;\ttt)$, where $\iota$ is
an upper bound on the complexity of $f(A;\ttt)$ and $f(B;\ttt)$.  If
$A$ and $B$ are disjoint, then the complexity of $f(A \cup B;\ttt)$ is
bounded by~$2\iota$, and $f(A \cup B;\ttt)$ can be computed in
$O(\iota)$ time.
\end{lemma}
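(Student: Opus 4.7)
The plan is to treat the disjoint case and the general case separately, handling the disjoint case by simple addition and reducing the general case to inclusion--exclusion combined with Lemma~\ref{l:hadamard}.

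In the disjoint case, $A \cap B = \nothing$ forces $f(A \cup B;\ttt) = f(A;\ttt) + f(B;\ttt)$, so the algorithm merely concatenates the two lists of summands representing the inputs. The output is a short rational generating function whose complexity is at most the sum of the input complexities, hence at most $2\iota$, and the concatenation itself is $O(\iota)$-time, as required.

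In the general case, inclusion--exclusion gives
\[
  f(A \cup B;\ttt) = f(A;\ttt) + f(B;\ttt) - f(A \cap B;\ttt),
\]
so the only substantive work is to compute $f(A \cap B;\ttt)$. The key observation is that because both $f(A;\ttt)$ and $f(B;\ttt)$ have all Taylor coefficients in $\{0,1\}$, the Hadamard product satisfies $f(A;\ttt) \star f(B;\ttt) = f(A \cap B;\ttt)$; indeed, the coefficient of $\ttt^\aa$ in the product is the product of two indicator values and is therefore $1$ precisely when $\aa \in A \cap B$. Since $A,B \subseteq C$ lie in a common pointed rational cone by hypothesis, Lemma~\ref{l:hadamard} applies and produces a short rational expression for $f(A \cap B;\ttt)$ in time $O(\iota^{c_1})$ for some constant $c_1 = c_1(k,d)$. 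The three resulting summands are then combined by concatenating their representations and flipping the signs of the $\alpha_i$ in the third, which adds at most $O(\iota^{c_1})$ further time and yields the desired $O(\iota^c)$ overall bound for some $c \in \NN$.

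The only delicate point, and the one I expect to be the real obstacle, is ensuring that Lemma~\ref{l:hadamard} is legitimately applicable: its proof implicitly requires each denominator exponent $\aa_{ir},\bb_{jr}$ to lie in the common cone $C$ so that a separating linear functional exists. This is not automatic from Definition~\ref{d:ratstrat}, so the plan includes rewriting (if necessary) the given presentations of $f(A;\ttt)$ and $f(B;\ttt)$ so that all denominator exponents sit in~$C$, using the standard short-rational-function manipulations of Barvinok--Woods; this does not inflate the complexity beyond a polynomial in~$\iota$ for fixed $k$ and $d$. Once that reduction is in place, every remaining step is routine bookkeeping and the complexity bounds follow by direct inspection.
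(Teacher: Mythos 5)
Your proposal is correct and follows essentially the same route as the paper: inclusion--exclusion $f(A\cup B;\ttt)=f(A;\ttt)+f(B;\ttt)-f(A\cap B;\ttt)$ with the intersection computed as a Hadamard product via Lemma~\ref{l:hadamard}, and plain addition in the disjoint case. Your extra remark about arranging the denominator exponents to lie in~$C$ is a reasonable point of care (the paper's proof of Lemma~\ref{l:hadamard} simply writes them as lying in~$C$), but it does not change the argument.
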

\begin{proof}
This follows from the fact that
$$%
  f(A \cup B;\ttt) = f(A;\ttt) + f(B;\ttt) - f(A \cap B;\ttt)
$$
and that $f(A \cap B;\ttt) = f(A;\ttt) \star f(B;\ttt)$ can be
computed in polynomial time, by Lemma~\ref{l:hadamard}.
\end{proof}

\begin{cor}\label{c:union}
Fix $k,d \in \NN$.  Let $A_1,\ldots,A_m \subseteq \ZZ^d$ lie in the
same pointed rational cone~$C$.  If $f(A_1;\ttt),\ldots,f(A_r;\ttt)$
are rational generating functions with $\le k$ binomials in their
denominators, and $A = A_1 \cup \cdots \cup A_m$, then for some $c \in
\NN$ there is an $O(2^m\iota^c)$ time algorithm for computing
$f(A;\ttt)$ as a rational generating function, where $\iota$ is an
upper bound on the complexity of $f(A_1;\ttt),\ldots,f(A_r;\ttt)$.  If
the $A_i$ are pairwise disjoint, then the complexity bound is
$O(m\iota)$.
\end{cor}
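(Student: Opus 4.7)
My plan is to use inclusion--exclusion in the general case and a trivial sum in the pairwise disjoint case.

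For the pairwise disjoint case, $f(A;\ttt)=\sum_{i=1}^m f(A_i;\ttt)$ already has the required rational form; producing this sum amounts to concatenating the representations of the summands, so the output complexity is at most $\sum_{i=1}^m \iota_i\leq m\iota$ and the cost of producing the answer is $O(m\iota)$.

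For the general case, inclusion--exclusion gives
$$
f(A;\ttt)=\sum_{\varnothing\ne S\subseteq\{1,\ldots,m\}}(-1)^{|S|+1}\,f\Bigl(\bigcap_{i\in S}A_i;\ttt\Bigr),
$$
a signed sum of $2^m-1$ terms. For each fixed $S$, the identity $f(A'\cap A'';\ttt)=f(A';\ttt)\star f(A'';\ttt)$ already used in the proof of Lemma~\ref{l:union} lets me build $f(\bigcap_{i\in S}A_i;\ttt)$ by folding Lemma~\ref{l:hadamard} from left to right across the $|S|$ factors. Since all of the $A_i$ lie in the common pointed rational cone~$C$, every invocation of Lemma~\ref{l:hadamard} applies and runs in time polynomial in the complexities of its two operands. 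Because $k$ and $d$ are held fixed and $|S|\leq m$, each term $f(\bigcap_{i\in S}A_i;\ttt)$ can be computed in time $\iota^{c'}$ for some constant $c'$ independent of the particular $A_i$; summing the $2^m-1$ signed terms (either by the alternating-signs version of Lemma~\ref{l:union} or just by listing the constituent fractions) therefore yields $f(A;\ttt)$ in total time $O(2^m\iota^c)$ for an appropriate constant~$c$.

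The step that requires the most care is tracking how the complexity of the partial Hadamard products $f(A_{i_1};\ttt)\star\cdots\star f(A_{i_j};\ttt)$ grows with~$j$, so as to conclude that it remains polynomial in~$\iota$ (rather than, say, doubly exponential) as $j$ ranges up to~$m$. This is controlled by the complexity estimates implicit in Lemma~\ref{l:hadamard} and ultimately by the Barvinok--Woods machinery \cite{BaWo03}: with $k$ and $d$ fixed, each Hadamard product enlarges the running complexity only polynomially, and iterating this bound at most $m-1$ times still leaves a bound polynomial in~$\iota$ for each subset~$S$. Everything else in the argument is pure bookkeeping.
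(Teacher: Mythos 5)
Your proof is correct, but it organizes the computation differently from the paper. The paper obtains $f(A;\ttt)$ by iterating Lemma~\ref{l:union} as a sequence of $m-1$ binary unions, $f(A_1\cup\cdots\cup A_j;\ttt)=f(A_1\cup\cdots\cup A_{j-1};\ttt)+f(A_j;\ttt)-f((A_1\cup\cdots\cup A_{j-1})\cap A_j;\ttt)$, and attributes the $2^m$ factor to the growth in the number of denominator binomials (and hence the size of the representation) as these unions accumulate. You instead expand the full inclusion--exclusion formula over all $2^m-1$ nonempty subsets and compute each term $f(\bigcap_{i\in S}A_i;\ttt)$ by folding Hadamard products; for you the $2^m$ factor counts subsets. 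The two are algebraically equivalent expansions, and the disjoint case is handled identically in both. One observation in each direction: your version makes the source of the $2^m$ more transparent, while the paper's version keeps the intermediate objects meaningful (partial unions) and only ever takes Hadamard products of two previously computed functions. Both arguments share the same delicate point, which you correctly single out: Lemma~\ref{l:hadamard} is stated for a \emph{fixed} number $k$ of denominator binomials, and after $j$ Hadamard products the terms can have up to $jk$ binomials, so the exponent in the Barvinok--Woods running time is not literally a constant independent of $m$. The paper acknowledges this growth only in passing ("may increase by a factor of up to~2 after computing each union") and absorbs it into the $2^m$; your proposal flags it explicitly but likewise does not fully discharge it. That is a shared looseness rather than a gap particular to your argument.
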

\begin{proof}
This follows from Lemma~\ref{l:union} and the fact that the number of
binomials in the denominators in the rational generating functions may
increase by a factor of up to~2 after computing each union.  If the
$A_i$ are pairwise disjoint, then
$$%
  f(A;\ttt) = \sum_{i=1}^m f(A_i;\ttt)
$$
and no intersections need to be computed.
\end{proof}

\begin{lemma}\label{l:affgenfun}
Fix $n$ and $d$.  If $A \subseteq \ZZ^d$ is a pointed affine semigroup
generated by $n$ integer vectors and has complexity $\iota$, then for
some positive integer $c$ there is an $O(\iota^c)$ time algorithm for
computing $f(A;\ttt)$.
\end{lemma}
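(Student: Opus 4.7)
The plan is to realize $A$ as the image of $\NN^n$ under the linear map determined by its generators, and then to compute $f(A;\ttt)$ by invoking the projection theorem of Barvinok and Woods on the trivial generating function of~$\NN^n$.

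Let $\aa_1, \ldots, \aa_n \in \ZZ^d$ be the generators of $A$, and let $\phi \colon \ZZ^n \to \ZZ^d$ be the integer linear map sending the $i$-th standard basis vector of $\ZZ^n$ to $\aa_i$. By the very definition of the affine semigroup generated by the $\aa_i$, we have $A = \phi(\NN^n)$. The generating function
$$
  f(\NN^n;\mathbf{s}) = \prod_{i=1}^n \frac{1}{1 - s_i}
$$
is a short rational function whose complexity is $O(n)$ (a constant since $n$ is fixed), and the map $\phi$ has complexity bounded by~$\iota$ since its matrix entries are precisely the entries of the generators of~$A$. Pointedness of $A$ guarantees that $f(A;\ttt) = \sum_{\aa \in A} \ttt^\aa$ converges as a Laurent series in some nonempty open region, so any rational function representing $f(\phi(\NN^n);\ttt)$ legitimately represents $f(A;\ttt)$ as a formal series.

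By the Barvinok--Woods projection theorem \cite{BaWo03}, whenever $n$ and $d$ are fixed, the image of a set given as a short rational generating function in $\ZZ^n$ under an integer linear map $\ZZ^n \to \ZZ^d$ can be computed as a short rational generating function in time polynomial in the complexity of the inputs. Applying this theorem to $f(\NN^n;\mathbf{s})$ and the map $\phi$ produces $f(A;\ttt)$ in $O(\iota^c)$ time for some positive integer~$c$. The main substantive ingredient is this projection theorem, which is invoked as a black box; the remaining work is routine bookkeeping to confirm that the complexity of $\phi$ is bounded by $\iota$ and that the polynomial running time of the projection algorithm, in fixed dimensions, translates into a bound of the form $O(\iota^c)$.
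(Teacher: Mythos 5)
Your high-level plan --- realize $A$ as $\phi(\NN^n)$ and compute the generating function of the image --- is the right idea, and it is morally what Barvinok and Woods do, but the black box you invoke does not exist in the form you state it. The Projection Theorem of \cite{BaWo03} (their Theorem~1.7) computes $f(T(P \cap \ZZ^n);\ttt)$ only when $P$ is a rational \emph{polytope}, i.e.\ bounded; there is no theorem there asserting that the image under a linear map of an arbitrary set presented by a short rational generating function (such as $\NN^n$) is computable, and that stronger statement is precisely the delicate point. The reason \cite{BaWo03} devotes a separate application (Section~7.3, the semigroup generated by given vectors) to this problem is that one must first reduce the unbounded projection to bounded ones; that reduction uses that the generators lie in $\NN^d$, so that the $\phi$-preimage of any box is a polytope to which the Projection Theorem applies. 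Your proof skips exactly this reduction and treats a nontrivial extension as if it were the stated theorem.

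The paper's proof accordingly has a step you omit: it first embeds $A$ into $\NN^d$ by a monomorphism $\nu$ built from integer inner normal vectors to $d$ linearly independent facets of the cone $\RR_{\geq 0}A$ (possible because $A$ is pointed; a small variant using linear functions vanishing on $A$ handles the case $\dim A < d$), then applies the machinery of \cite[Section~7.3]{BaWo03} to $\nu(A) \subseteq \NN^d$, and finally pulls back along $\nu^{-1}$ on exponents. You invoke pointedness only to justify convergence of the Laurent series, but its real role is to make this embedding possible. To repair your argument, replace the appeal to the Projection Theorem by an appeal to the semigroup result of \cite[Section~7.3]{BaWo03}, and insert the embedding into $\NN^d$ before applying it.
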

\begin{proof}
Let $A = \NN\{\aa_1,\ldots,\aa_n\}$.  It is algorithmically easy to
embed $A$ into $\NN^d$: if $A$ has dimension $d$, then find $d$
linearly independent facets and take their integer inner normal
vectors as the columns of the embedding $\nu$; if $A$ has dimension
$d' < d$, then find $d'$ linearly independent facets and any $d - d'$
linear integer functions that vanish on~$A$.  Use the discussion of
\cite[Section~7.3]{BaWo03} to compute $f(\nu(A);\ttt)$.  Then apply
$\nu^{-1}$ to the exponents in $f(\nu(A);\ttt)$ to get $f(A;\ttt)$.
\end{proof}

\begin{lemma}\label{l:module}
Fix $d$. Let $W = F + A$, where $A \subseteq \ZZ^d$ is a pointed affine
semigroup with complexity $\iota$ and $F \subseteq \ZZ^d$ is a finite
set with $|F| = m$.  For some $c \in \NN$ there is an $O(2^m\iota^c)$
time algorithm for computing $f(W;\ttt)$ as a rational function.
\end{lemma}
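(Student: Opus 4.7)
The plan is to decompose $W = F + A$ as the union $\bigcup_{\bb \in F}(\bb + A)$ of $m$ translates of the affine semigroup~$A$, compute a short rational generating function for each translate, and then combine them via Corollary~\ref{c:union}.

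First, I would apply Lemma~\ref{l:affgenfun} to compute $f(A;\ttt)$ as a short rational generating function in time $O(\iota^{c_1})$ for some constant~$c_1$ depending on~$d$ (and on the fixed bound on the number of generators of~$A$). Next, for each of the $m$ points $\bb \in F$, I would obtain $f(\bb + A;\ttt) = \ttt^{\bb}\cdot f(A;\ttt)$ by shifting each numerator exponent of $f(A;\ttt)$ by~$\bb$; the denominators are unchanged, so each of these $m$ rational functions still has complexity $O(\iota^{c_1})$, and producing all of them together costs $O(m\iota^{c_1})$ time.

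Before invoking Corollary~\ref{c:union}, I would verify its hypothesis that the sets $\bb + A$ lie in a common pointed rational cone. Because $A$ is pointed, the cone $\RR_{\geq 0}A$ is pointed and rational; choosing any lattice point $\mathbf{v}$ in its interior, one can take a positive integer $N$ large enough that $N\mathbf{v} + \bb \in \RR_{\geq 0}A$ for every $\bb \in F$, and then the translated pointed rational cone $-N\mathbf{v} + \RR_{\geq 0}A$ contains every translate $\bb + A$. With the hypothesis in hand, Corollary~\ref{c:union} assembles $f(W;\ttt)$ from the $m$ functions $f(\bb+A;\ttt)$ in time $O(2^m \iota^{c_2})$ for some $c_2 \in \NN$, which gives the claimed bound.

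The main obstacle is essentially bookkeeping rather than a genuine obstruction: the $2^m$ factor is inherent to the inclusion--exclusion carried out in Corollary~\ref{c:union}, since the translates $\bb + A$ generally overlap in complicated ways. The only nontrivial technical point is verifying the common-cone hypothesis, which is handled by the translation argument above; everything else reduces directly to the previously established Lemma~\ref{l:affgenfun} and Corollary~\ref{c:union}.
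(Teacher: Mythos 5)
Your proof is correct and follows essentially the same route as the paper's: decompose $W$ as the union of the translates $\bb + A$, compute $f(\bb+A;\ttt) = \ttt^{\bb}f(A;\ttt)$ via Lemma~\ref{l:affgenfun}, and assemble the union with Corollary~\ref{c:union} to get the $O(2^m\iota^c)$ bound. The only (immaterial) difference is how the common-pointed-cone hypothesis is justified: the paper notes that a linear functional positive on $A \smallsetminus \{\0\}$ is bounded below on $W$ because $F$ is finite, while you translate the cone $\RR_{\geq 0}A$; both serve the same purpose.
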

\begin{proof}
Let $F = \{\bb_1,\ldots,\bb_m\}$.  Since $F$ is finite, any linear
function that is positive on $A \minus \{0\}$ is bounded below on
$W$.  Therefore, there exists a pointed rational cone that contains
each $\bb_j + A$.  For each $j$, $f(\bb_j + A; \ttt) = \ttt^{\bb_j}
f(A;\ttt)$, each of which has complexity $O(\iota)$ and can be
computed in $O(\iota^{c'})$ time, for some $c' > 0$, by
Lemma~\ref{l:affgenfun}.  Since $W$ is the union of the $\bb_j + A$,
it follows from Corollary~\ref{c:union} that $f(W;\ttt)$ can be
computed in $O(2^m\iota^c)$.
\end{proof}

We now return to proving our main theorem.

\smallskip
\begin{trivlist}\item\textbf{Proof of Theorem~\ref{t:strat}.}
Write
$$%
  \cP = \biguplus_{i=1}^r W_i
$$
where $W_i = F_i + A_i$ for affine semigroups $A_i \subseteq \ZZ^d$
and finite sets $F_i \subseteq \ZZ^d$.  Let $\iota$ be an upper bound
on the complexity of each of the $A_i$.  Since the sizes of the $F_i$
are fixed, by Lemma~\ref{l:module} we can compute each $f(W_i;\ttt)$
in $O(\iota^c)$ time, for some positive integer~$c$.  Since the $W_i$
are pairwise disjoint, by Corollary~\ref{c:union} we can compute
$f(\cP;\ttt)$ in $O(r\iota^c)$ time.\hfill$\Box$
\end{trivlist}\medskip

There is little hope that the complexity of calculating affine
stratifications---or even merely rational strategies---should be
polynomial in the input complexity when certain parameters are not
fixed.  Indeed, the complexity of the generating function for an
affine semigroup fails to be polynomial in the number of its
generators.  Thus it makes sense to restrict complexity estimates to
lattice games with rule sets of fixed complexity.  On the other hand,
there is hope that the complexity of an affine stratification should
be bounded by the complexity of the rule set.  Therefore, once the
complexity of the rule set has been fixed, the algorithms dealing with
affine stratifications could be polynomial.

\section{Determining mis\`ere congruence}\label{s:cong}

This section provides an algorithm to determine whether two given
positions are mis\`ere congruent.  The notion of mis\`ere congruence
is simply the translation of ``indistinguishability''
\cite{Pla05,misereQuots} into the language of lattice games.

\begin{defn}
Two positions $\pp$ and $\qq$ are \emph{(mis\`ere) congruent} if
$$%
  (\pp + C) \cap \cP - \pp = (\qq + C) \cap \cP - \qq.
$$
\end{defn}

\begin{thm}\label{t:disting}
Given a rational strategy $f(\cP;\ttt)$ and $\pp,\qq \in \cB$, there
is a polynomial time algorithm for determining whether $\pp$ and~$\qq$
are mis\`ere congruent.
\end{thm}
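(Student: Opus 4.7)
The plan is to write each of the two sets
\[
  S_\pp = \bigl((\pp + C) \cap \cP\bigr) - \pp
  \qquad\text{and}\qquad
  S_\qq = \bigl((\qq + C) \cap \cP\bigr) - \qq
\]
as short rational generating functions produced from $f(\cP;\ttt)$ by polynomial-time operations, and then to certify $S_\pp = S_\qq$ by testing whether the associated rational function vanishes. Misère congruence of $\pp$ and $\qq$ is literally the equality $S_\pp = S_\qq$, and both sets are contained in the pointed cone $C$, so their generating functions are bona fide Laurent series supported in $C$ and agree as series iff they agree as rational functions.

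To build $f(S_\pp;\ttt)$, first use Barvinok's algorithm on the pointed rational cone $C$ to obtain a short rational expression for $f(C \cap \ZZ^d;\ttt)$; multiplying by the monomial $\ttt^\pp$ gives $f((\pp + C) \cap \ZZ^d;\ttt)$ without increasing the number of binomial factors in the denominators. Since $\pp + C$ and $\cP$ both sit inside a common pointed rational cone (take $\pp_0 + C$ with $\pp_0$ chosen so that $\pp_0 + C \supseteq \cP$), Lemma~\ref{l:hadamard} computes the Hadamard product
\[
  f\bigl((\pp + C) \cap \cP;\ttt\bigr)
  \;=\;
  f(\pp + C;\ttt)\star f(\cP;\ttt)
\]
in time polynomial in the complexity of $f(\cP;\ttt)$ and the bit lengths of $\pp$ and the data defining~$C$. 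Multiplying the result by the monomial $\ttt^{-\pp}$ shifts into $C$ and yields a short rational generating function for $S_\pp$; the same recipe, applied to $\qq$, produces $f(S_\qq;\ttt)$. All of these steps inflate the number of denominator binomials only by a bounded factor and keep the number of summands polynomial in the input size, so the overall complexity remains polynomial.

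The main obstacle, and the only nontrivial step, is certifying that the two resulting short rational generating functions represent the same set. The natural attempt of putting $f(S_\pp;\ttt) - f(S_\qq;\ttt)$ over a common denominator and checking that the numerator vanishes would blow up exponentially in the dimension, so instead the plan is to realize the difference itself as a short rational generating function for the symmetric difference $S_\pp \triangle S_\qq$, by combining Lemma~\ref{l:hadamard} (for intersection) with Lemma~\ref{l:union} (for union), and then invoke the polynomial-time emptiness test for sets presented by short rational generating functions in fixed dimension (a standard application of the Barvinok--Woods specialization machinery used in~\cite{BaWo03}). Since $\pp$ and $\qq$ are misère congruent iff $S_\pp \triangle S_\qq = \nothing$, this completes the algorithm; correctness is immediate from the definition of misère congruence, and the polynomial time bound follows by composing the polynomial time bounds of each of the constituent operations.
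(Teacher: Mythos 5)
Your proposal matches the paper's proof: both form $S_\pp$ and $S_\qq$, compute $f((\pp+C)\cap\cP;\ttt)$ as a Hadamard product via Lemma~\ref{l:hadamard}, shift by $\ttt^{-\pp}$ (resp.\ $\ttt^{-\qq}$), and reduce mis\`ere congruence to the identity $f(S_\pp;\ttt)-f(S_\qq;\ttt)=0$. The only difference is that you spell out how to decide that identity (realizing $S_\pp \triangle S_\qq$ as a short rational generating function and testing emptiness via the Barvinok--Woods machinery), a step the paper leaves implicit, so if anything your version is slightly more complete.
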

\begin{proof}
Let $S_\pp = (\pp + C) \cap \cP - \pp$ and $S_\qq = (\qq + C) \cap \cP
- \qq$.  Since $\pp \in C$, we have $\pp + C \subseteq C$, and $\cP
\subseteq C$ by definition, so we may apply Lemma \ref{l:hadamard} to
compute $f((\pp + C) \cap \cP;\ttt)$ in polynomial time.  Then we can
compute $f(S_\pp;\ttt)$ in polynomial time since $f(S_\pp;\ttt) =
\ttt^{-\pp}f((\pp + C) \cap \cP;\ttt)$.  Similarly, we compute
$f(S_\qq;\ttt)$ in polynomial time.  Then $\pp$ and $\qq$ are
congruent if and only if $f(S_\pp;\ttt) - f(S_\qq;\ttt) = 0$.
\end{proof}

\begin{cor}
Given an affine stratification of a lattice game, there is a
polynomial time algorithm that decides on the mis\`ere congruence of
any pair of positions.
\end{cor}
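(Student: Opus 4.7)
The plan is to chain together Theorem~\ref{t:strat} and Theorem~\ref{t:disting}: given an affine stratification, first extract a rational strategy from it, then test mis\`ere congruence via the rational strategy.

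More precisely, suppose we are given an affine stratification $\cP = \biguplus_{i=1}^r W_i$ with $W_i = F_i + A_i$, together with positions $\pp,\qq \in \cB$. First I would apply Theorem~\ref{t:strat} to produce a rational strategy $f(\cP;\ttt)$ in time polynomial in the input complexity of the stratification, under the stated hypotheses (fixed dimension~$d$ and uniformly bounded $|F_i|$). Next, I would feed the resulting $f(\cP;\ttt)$, together with $\pp$ and $\qq$, into the algorithm of Theorem~\ref{t:disting}, which decides mis\`ere congruence of $\pp$ and $\qq$ in polynomial time in the complexity of $f(\cP;\ttt)$ and of $\pp,\qq$.

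What needs checking is that the composition is still polynomial time in the input size. The rational strategy produced by the proof of Theorem~\ref{t:strat} is assembled via Lemma~\ref{l:module} and Corollary~\ref{c:union}; inspecting those proofs, its complexity is bounded by a polynomial in the complexity of the stratification. Since Theorem~\ref{t:disting} then runs in time polynomial in the complexity of its inputs, the total running time is polynomial in the complexity of the affine stratification plus the bit length of $\pp$ and $\qq$. I would end the proof with essentially one sentence: ``Compute a rational strategy by Theorem~\ref{t:strat}, then apply Theorem~\ref{t:disting}.''

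There is no real obstacle here; the only subtlety is the bookkeeping that the intermediate rational strategy has complexity polynomial in that of the stratification, so that the two polynomial-time algorithms compose to a polynomial-time algorithm, rather than merely producing a polynomial-time procedure in the (possibly much larger) complexity of $f(\cP;\ttt)$.
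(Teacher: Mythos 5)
Your proposal matches the paper's proof exactly: the paper's argument is precisely to produce a rational strategy via Theorem~\ref{t:strat} and then apply Theorem~\ref{t:disting}. Your additional remark about verifying that the intermediate rational strategy has complexity polynomial in that of the stratification is a reasonable (and correct) piece of bookkeeping that the paper leaves implicit.
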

\begin{proof}
In polynomial time, Theorem \ref{t:strat} produces a rational strategy
for the lattice game and then Theorem~\ref{t:disting} decides on the
congruence.
\end{proof}

\enlargethispage{1ex}

\end{document}